\newcommand{\R}{\mathbb{R}}
\newcommand{\N}{\mathbb{N}}
\newcommand{\K}{\mathbb{K}}
\newtheorem{theorem}{Theorem}
\newtheorem{proposition}[theorem]{Proposition}
\newtheorem{definition}[theorem]{Definition}
\title[A short remark on the surjectivity of the combinatorial Laplacian]{A short remark on the surjectivity of the combinatorial Laplacian on infinite graphs}
\author{T. Kalmes}
\thanks{\noindent 2010 {\it Mathematics Subject Classification.} 46A04, 05C63\\
{\it Key words and phrases}: combinatorial Laplacian, Eidelheit's Theorem, surjectivity}
\begin{document}

\begin{abstract}
Applying a well-known theorem due to Eidelheit, we give a short proof of the surjectivity of the combinatorial Laplacian on connected locally finite undirected simplicial graph $G$ with countably infinite vertex set $V$ established in \cite{CeCoDo12}. In fact, we show that every linear operator on $\K^V$ which has finite hopping range and satisfies the pointwise maximum principle is surjective.
\end{abstract}

\maketitle

\section{Introduction}

In \cite{CeCoDo12} Ceccherini-Silberstein, Coornaert, and Dodziuk showed that on a connected locally finite simplicial undirected graph $G$ with a countably infinite vertex set $V$ the combinatorial Laplacian on the real valued functions on $V$, $\Delta_G:\R^V\rightarrow\R^V$ defined by
\[\forall\,v\in V:\,\Delta_G\,f(v)=f(v)-\frac{1}{\mbox{deg}(v)}\sum_{v\sim w}f(w),\]
is surjective. Two vertices $v,w\in V$ of $G$ are adjacent, $v\sim w,$ if $(v,w)$ is an edge of $G$. Recall that a graph is locally finite if for every vertex $v$ of $G$ the number $\mbox{deg}(v)$ of adjacent vertices is finite and that $G$ is connected if for any pair of different vertices $v,w$ there is a finite number of edges $(v_0,v_1),\ldots,(v_{n-1},v_n)$ of $G$ with $v\in\{v_0,v_n\}$ and $w\in\{v_0,v_n\}$. Finally, $G$ is simplicial if it does not have any loops, i.e.\ $(v,v)$ is not an edge of $G$ for any vertex $v$, and $G$ is undirected if $(w,v)$ is an edge of $G$ whenever $(v,w)$ is an edge of $G$.

In \cite{CeCoDo12} the surjectivity of the Laplacian was proved by a Mittag-Leffler argument and an application of the (pointwise) maximum principle for $\Delta_G$. There is a vast amount of literature dealing with a systematic study of the Mittag-Leffler procedure and its applications to a wide range of surjectivity problems, see e.g.\ \cite{Wengenroth03} and the references therein. As noted in \cite{CeCoDo12}, the prove of surjectivity of $\Delta_G$ extends to more general operators, e.g.\ to operators of the form $\Delta_G+\lambda$, where $\lambda:V\rightarrow [0,\infty)$.

The aim of this note is to give a very short proof of the following generalisation of the above result.\\

\textbf{Theorem 1.} \textit{Let $G$ be a locally finite connected graph (which may be directed or undirected) with countably infinite vertex set $V$. Every linear operator $A:\K^V\rightarrow\K^V$ which has finite hopping range and satisfies the pointwise maximum principle is surjective.}\\

Note that $\Delta_G$ has finite hopping range and satisfies the pointwise maximum principle. (For a precise definition of finite hopping range and the pointwise maximum principle see section 2.) The proof of the above theorem relies on a well-known result due to Eidelheit.

\section{Proof of Theorem 1}

We equip $\K^\N$ with its usual Fr\'echet space topology, i.e.\ the locally convex topology defined by the increasing fundamental system of seminorms $(p_k)_{k\in\N}$ given by
\begin{equation}\label{norms}
\forall\,f=(f_j)_{j\in\N}\in\K^\N:\,p_k(f)=\sum_{j=1}^k|f_j|.
\end{equation}
As usual, we denote this Fr\'echet space by $\omega$. The dual space $\omega'$ of $\omega$ is given by the space of finitely supported sequences $\varphi$. For $j\in\N$ we set $\pi_j:\omega\rightarrow\K,(f_m)_{m\in\N}\mapsto f_j$ so that $\varphi=\mbox{span}\{\pi_j;\,j\in\N\}$.

If $E$ is any Fr\'echet space and $A:E\rightarrow\omega$ linear and continuous we set $A_j:=\pi_j\circ A, j\in\N,$ so that $A_j\in E'$. A straight forward calculation gives that the transpose $A^t:\varphi\rightarrow E'$ is given by $A^t(y)=\sum_{j=1}^\infty y_j A_j$, where only finitely many $y_j$'s do not vanish. Thus, the linear independence of $(A_j)_{j\in\N}$ is equivalent to the injectivity of $A^t$. By the Hahn-Banach Theorem, the later is equivalent to $A$ having dense range. We recall the following theorem due to Eidelheit (see e.g.\ \cite{Eidelheit1936} or \cite[Theorem 26.27]{MeVo1997}).\\

\textbf{Eidelheit's Theorem} \textit{Let $E$ be a Fr\'echet space, $(p_k)_{k\in\N}$ be an increasing fundemantal system of seminorms on $E$ and let $A:E\rightarrow\omega$ be linear and continuous. Then, $A$ is surjective if, and only if, for $(A_j)_{j\in\N}$ defined as above the following conditions are satisfied.
\begin{itemize}
\item[i)] $(A_j)_{j\in\N}$ is linearly independent.
\item[ii)] For every $k\in\N$
\[\mbox{dim}(\{\phi\in E';\,\exists\,c>0:|\phi|\leq c\, p_k\}\cap\mbox{span}\{A_j;\,j\in\N\})<\infty.\]
\end{itemize}
}

For $E=\omega$ with the increasing fundamental system of seminorms $(p_k)_{k\in\N}$ given by $(\ref{norms})$, it is immediate that for $y\in\varphi$ there is $c>0$ with $|\langle y,f\rangle|\leq cp_k(f)$ for all $f\in\omega$ precisely when $y\in\varphi_k:=\{x=(x_j)_{j\in\N}\in\omega;\,x_j=0\mbox{ for all }j>k\}$. Since $\varphi_k$ is obviously finite dimensional, by Eidelheit's Theorem, a linear continuous operator $A:\omega\rightarrow\omega$ is surjective if and only if $(A_j)_{j\in\N}$ is linearly indepent.\\

Now, let $G$ be a locally finite connected graph with countably infinite vertex set $V$. For each $v\in V$ and $n\in\N$ we define $U_n(v)$ to be the union of $\{v\}$ with the set of all endpoints of paths in $G$ staring in $v$ and of length not exceeding $n$ together with the set of starting points of paths in $G$ ending in $v$ and of length not exceeding $n$.

\begin{definition}\label{defi}
\begin{rm}
Let $G$ be a graph with vertex set $V$ and let $A:\K^V\rightarrow\K^V$ be linear.
\begin{itemize}
\item[i)] $A$ has {\it finite hopping range} if for every $v\in V$ there is $n\in\N$ such that the following implication holds
\[\forall\,f,g\in\K^V:f_{|U_n(v)}=g_{|U_n(v)}\Rightarrow A(f)(v)=A(g)(v).\]
\item[ii)] $A$ satisfies the \textit{pointwise maximum principle} if for every $v\in V$ there is $n\in\N$ such that for each $f\in\K^V$ with $A(f)(v)=0$ the implication
\[|f(v)|=\max_{U_n(v)}|f(w)|\Rightarrow \forall\,w\in U_n(v):\,|f(w)|=|f(v)|\]
holds.
\end{itemize}
\end{rm}
\end{definition}

Enumeration of the vertices $V=\{v_k;\,k\in\N\}$ of $G$ clearly gives an isomorphism of $\K^V$ onto $\K^\N$. In order to keep notation simple, for a linear mapping $A:\K^V\rightarrow\K^V$ we denote by $A$ also the linear operator on $\K^\N$ induced by the canonical isomporphism between $\K^V$ and $\K^\N$. Since the linear operator $A:\omega\rightarrow\omega$ is continuous if and only if $A_j\in\varphi$ for all $j\in\N$, using that $G$ is connected and locally finite, a straight forward calculation shows that $A:\omega\rightarrow\omega$ is continuous if and only if the inducing $A:\K^V\rightarrow \K^V$ has finite hopping range.

\begin{proposition}
Let $G$ be a locally finite connected graph with countably infinite vertex set $V$ and let $A:\K^V\rightarrow\K^V$ be linear. If $A$ has finite hopping range and satisfies the pointwise maximum principle, then $(A_j)_{j\in\N}$ is a linearly independent sequence of continuous linear forms on $\omega$.
\end{proposition}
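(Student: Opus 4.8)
The plan is to prove the two assertions separately. Since $A$ has finite hopping range, the discussion preceding the proposition shows that $A\colon\omega\to\omega$ is continuous, so that each $A_j=\pi_j\circ A$ lies in $\varphi=\omega'$; thus the $A_j$ are automatically continuous linear forms on $\omega$, and the only substantial point is their linear independence. I would deduce that from the following finite–support uniqueness statement, which I regard as the heart of the matter:

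\smallskip
$(*)$ \emph{If $f\in\K^V$ has finite support and $A(f)(v)=0$ for every $v\in\supp f$, then $f=0$.}
\smallskip

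To prove $(*)$ I would run a "propagation of the maximum" argument. Assuming $f\neq0$, the set $M:=\{v\in V:\ |f(v)|=\max_{w\in V}|f(w)|\}$ is a nonempty finite subset of $\supp f$ on which $|f|$ is positive. Fix $v_0\in M$. Then $A(f)(v_0)=0$ (since $v_0\in\supp f$), and because $v_0$ realizes the global maximum of $|f|$ and $v_0\in U_n(v_0)$, we also get $|f(v_0)|=\max_{w\in U_n(v_0)}|f(w)|$ for the radius $n=n(v_0)\ge 1$ supplied by the pointwise maximum principle at $v_0$. The maximum principle then forces $|f|\equiv|f(v_0)|$ on $U_n(v_0)$, whence $U_1(v_0)\subseteq U_n(v_0)\subseteq M$. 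Since $U_1(v_0)$ consists of $v_0$ together with all vertices joined to $v_0$ by an edge of $G$ (in either direction), $M$ is stable under passing along an edge, and by connectedness of $G$ this propagates membership in $M$ along a connecting walk to every vertex; hence $M=V$, contradicting the finiteness of $M\subseteq\supp f$ and the infiniteness of $V$.

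Granting $(*)$, the linear independence becomes finite–dimensional. Suppose $\sum_{j\in F}c_jA_j=0$ for a finite $F\subseteq\N$, and put $\Omega:=\{v_j:\ j\in F\}$. The linear endomorphism $R$ of $\K^\Omega$ given by $R(h)=(A(\hat h)(v))_{v\in\Omega}$, where $\hat h\in\K^V$ denotes the extension of $h$ by zero, is injective by $(*)$: if $R(h)=0$ then $\hat h$ has finite support contained in $\Omega$ and $A(\hat h)$ vanishes on $\Omega\supseteq\supp\hat h$, so $\hat h=0$. An injective endomorphism of a finite–dimensional space is onto, so $R$ is surjective. Evaluating the identity $\sum_{j\in F}c_jA_j=0$ at the functions $\hat h$, $h\in\K^\Omega$, shows that the vector $(c_j)_{j\in F}$ annihilates the range of $R$, i.e.\ all of $\K^\Omega$; therefore $c_j=0$ for every $j\in F$, and $(A_j)_{j\in\N}$ is linearly independent.

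I expect the propagation step in $(*)$ to be the only delicate point. Two things need care there: that the maximum–principle radius is $\ge 1$ (otherwise $(*)$, and indeed the proposition — as $A=0$ shows — fails, so one uses $\N=\{1,2,\dots\}$ and that every vertex of a connected infinite graph has a neighbour), and that $U_1(v)$ already contains all graph–neighbours of $v$ irrespective of edge orientation, which is what makes the argument work equally in the directed case. Note that, in contrast with the Mittag–Leffler approach of \cite{CeCoDo12}, no exhaustion of $V$ is needed: the finite–dimensional surjectivity of $R$ comes for free from its injectivity by equality of dimensions.
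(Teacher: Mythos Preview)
Your proof is correct and follows essentially the same approach as the paper: both reduce the linear independence of $(A_j)$ to the injectivity of a finite-dimensional restriction of $A$ (your $R$ on $\K^\Omega$, the paper's $M_k\circ A_{|\varphi_k}$ on $\varphi_k$), and both establish that injectivity by the same propagation-of-the-maximum argument using the pointwise maximum principle together with connectedness to force the maximum outside the finite support. Your isolation of the auxiliary statement $(*)$ and the explicit check that the maximum-principle radius satisfies $n\ge 1$ are tidy touches, but the underlying ideas coincide with the paper's proof.
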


\begin{proof} As pointed out above, the claim is equivalent to the injectivity of $A^t$. For $k\in\N$ we define
\[M_k:\omega\rightarrow\omega,(f_j)_{j\in\N}\mapsto (f_1,\ldots,f_k,0,\ldots).\]
Then $M_k$ is a continuous linear operator on $\omega$ with $M_k^t=M_{k|\varphi}$.

Fix $y\in\varphi$ with $A^t(y)=0$. Then there is $k\in\N$ with $y\in\varphi_k$ and
\begin{eqnarray*}
\forall\,f\in\varphi_k:0&=&\langle y,A(f)\rangle=\langle M_k^t(y),(A_{|\varphi_k}(f)\rangle=\langle y,(M_k\circ A_{|\varphi_k})(f)\rangle\\
&=&\langle (M_k\circ A_{|\varphi_k})^{t_k}(y),f\rangle_{k},
\end{eqnarray*}
where $\langle\cdot,\cdot\rangle_{k}$ denotes the duality bracket between $\varphi_k'$ and $\varphi_k$ and $(M_k\circ A_{|\varphi_k})^{t_k}$ the transpose of
\[M_k\circ A_{|\varphi_k}:\varphi_k\rightarrow\varphi_k\]
with respect to this duality. From the above we conclude that $(M_k\circ A_{|\varphi_k})^{t_k}(y)=0$ so that $y=0$ if we can show that $(M_k\circ A_{|\varphi})^{t_k}$ is injective. Since $\varphi_k$ is finite dimensional, the later holds precisely when $M_k\circ A_{|\varphi_k}$ is injective, which will be proved as in \cite{CeCoDo12} by  the pointwise maximum principle for $A$.

For $j\in\N$ we define $N(j)=\{l\in\N;\,v_l\in U_n(v_j)\}$, where $n$ is chosen as in Definition \ref{defi} ii) for $v=v_j$. Let $f\in\varphi_k$ satisfy $(M_k\circ A)(f)=0$ and let $1\leq k_0\leq k$ be such that $|f_{k_0}|=\max\{|f_j|:\,j\in\N\}=:M$. By $0=(M_k\circ A)(f)(k_0)=A(f)(k_0)$ and the pointwise maximum principle we conclude $|f_j|=M$ for all $j\in N(k_0)$. If there is $j\in N(k_0)$ with $j>k$ then $M=0$ , since $f$ vanishes in $\{k+1,\ldots\}$. If $N(k_0)\subseteq\{1,\ldots,k\}$ it follows again from the maximum principle that $|f_j|=M$ for all $j\in\cup_{l\in N(k_0)}N(l)$. Using that $G$ is connected, a finite numbers of iterations of this process finally yields that $|f_j|=M$ for some $j\in\{k+1,\ldots\}$,  where $f$ vanishes so that $M=0$, i.e.\ $f=0$.
\end{proof}

\textit{Proof of Theorem 1.} Theorem 1 follows now immediately from the considerations following Eidelheit's Theorem and the above proposition.\hfill$\square$\\

\noindent\textbf{Acknowledgement.} I want to thank Daniel Lenz for turning my attention to the article \cite{CeCoDo12}.

\begin{small}
{\sc Technische Universit\"at Chemnitz,
Fakult\"at f\"ur Mathematik, 09107 Chemnitz, Germany}

{\it E-mail address: thomas.kalmes@mathematik.tu-chemnitz.de}
\end{small}

\begin{thebibliography}{99}
%	\bibitem{BrVo1997} Braun, R\"udiger W. and Vogt, Dietmar, {\it A sufficient condition for ${\rm Proj}^1{\mathscr X}=0$}, Michigan Math. J., vol.\ 44(1):149--156, 1997.
	\bibitem{CeCoDo12} Ceccherini-Silberstein, Tullio and Coornaert, Michel and Dodziuk, J{\'o}zef, {\it The surjectivity of the combinatorial Laplacian on infinite graphs}, Enseign. Math. (2), vol.\ 58(1-2):125--130, 2012.
	\bibitem{Eidelheit1936} Eidelheit, M., {\it Zur Theorie der Systeme linearer Gleichungen}, Studia Math., 6:139--148, 1936.
%	\bibitem{FrKuWe03} Frerick, Leonhard and Kunkle, Daren and Wengenroth, Jochen, {\it The projective limit functor for spectra of webbed spaces}, Studia Math., vol.\ 158(2):117--129, 2003.
%	\bibitem{FrWe1996} Frerick, Leonhard and Wengenroth, Jochen, {\it A sufficient condition for vanishing of the derived projective limit functor}, Arch.\ Math.\ (Basel), vol.\ 67(4):296--301, 1996.
    \bibitem{MeVo1997} Meise, Reinhold and Vogt, Dietmar, {\it Introduction to functional analysis}, volume 2 of {\it Oxford Graduate Texts in Mathematics}, The Clarendon Press Oxford University Press, New York 1997.
%	\bibitem{Vogt1989} Vogt, Dietmar, {\it Topics on projective spectra of ({LB})-spaces}, in {\it Advances in the theory of {F}r\'echet spaces ({I}stanbul, 1988)}, pages 11--27, Kluwer Acad.\ Publ., Dordrecht, 1989.     
    \bibitem{Wengenroth03} Wengenroth, Jochen, {\it Derived functors in functional analysis}, Lecture Notes in Mathematics, volume 1810, Springer-Verlag, Berlin, 2003.
%    \bibitem{Wielandt1950} Wielandt, Helmut, {\it Unzerlegbare, nicht negative Matrizen}, Math. Z., 52:642--648, 1950.
\end{thebibliography}
\end{document}